\newcommand{\bprop} {\begin{proposition}}
\newcommand{\eprop} {\end{proposition}}
\newcommand{\btheo} {\begin{theorem}}
\newcommand{\etheo} {\end{theorem}}
\newcommand{\blem} {\begin{lemma}}
\newcommand{\elem} {\end{lemma}}
\newcommand{\bcor} {\begin{corollary}}
\newcommand{\ecor} {\end{corollary}}
\newcommand{\Be}{\begin{equation}}
\newcommand{\Ee}{\end{equation}}
\newcommand{\Bea}{\begin{eqnarray}}
\newcommand{\Eea}{\end{eqnarray}}
\newcommand{\Bes}{\begin{equation*}}
\newcommand{\Ees}{\end{equation*}}
\newcommand{\Beas}{\begin{eqnarray*}}
\newcommand{\Eeas}{\end{eqnarray*}}
\newcommand{\Ba}{\begin{array}}
\newcommand{\Ea}{\end{array}}
\newtheorem{theorem}{T{\hskip 0pt\footnotesize\bf HEOREM}}[section]
\newtheorem{lemma}[theorem]{L{\hskip 0pt\footnotesize\bf EMMA}}
\newtheorem{proposition}[theorem]{P{\hskip 0pt\footnotesize\bf ROPOSITION}}
\newtheorem{corollary}[theorem]{C{\hskip 0pt\footnotesize\bf OROLLARY}}
\begin{document}


\title{Bounded and invertible Toeplitz products on vector weighted Bergman spaces of the polydisc}

\author{Beno\^it F. Sehba}
\address{Department of Mathematics University of Ghana PO. Box LG 62 Legon Accra Ghana}


\email{bfsehba@ug.edu.gh}


\begin{abstract}
 We characterize bounded and invertible Toeplitz products on vector weighted Bergman spaces of the unit polydisc. For our purpose, we will need the notion of B\'ekoll\'e-Bonami weights in several parameters.
\end{abstract}

\keywords{ Bergman spaces, Reproducing kernel, Toeplitz operator,  B\'ekoll\'e-Bonami weight}
\subjclass[2010] {32A36,  47B35, 47B38}


\maketitle

\section{Introduction}
Let us denote by $d\nu$ the normalized Lebesgue measure on the unit disc
$\mathbb D$ of $\mathbb C$.
For $\alpha>-1$, we denote by $d\nu_{\alpha}$ the normalized Lebesgue measure $d\nu_{\alpha}(z)=c_{\alpha}(1-|z|^2)^{\alpha}d\nu(z)$, $c_\alpha$ being the normalizing constant. The weighted Bergman space $A_\alpha^{p}(\mathbb D)$ is the space of holomorphic functions $f$ such that
$$||f||_{ A_\alpha^{p}}^p:=\int_{\mathbb D}|f(z)|^pd\nu_{\alpha}(z)<\infty.$$

%

The Bergman space $A_{\alpha}^{2}(\mathbb D)$ is a reproducing kernel Hilbert space with kernel $K_w^\alpha (z)=K^\alpha (w,z)=\frac{1}{(1-w\overline {z})^{2+\alpha}}$. That is for any $f\in A_{\alpha}^{2}(\mathbb D)$, the following representation holds
\begin{equation}
f(w)=P_{\alpha}f(w)=\langle f,K_w^{\alpha}\rangle_{\alpha}=\int_{\mathbb {D}}f(z)K^{\alpha}(w,z)d\nu_{\alpha}(z),\,\,\,w\in \mathbb D.
\end{equation}
For $f\in L^2(\mathbb {D}, d\nu_{\alpha})$, we can densely define the Toeplitz operator $T_f$ with symbol $f$ on $A_{\alpha}^{2}(\mathbb D)$ as follows
\begin{equation}\label{eq:toeplitzdef}
T_f(g)=P_{\alpha}(M_f)(g)=P_{\alpha}(fg)
\end{equation}
where $M_f$ is the multiplication operator by $f$.
The Berezin transform is the operator defined on $L^1(\mathbb {D}, d\nu_\alpha)$ by
$$B_\alpha (f)(w)=\int_{\mathbb {D}}f(z)|k_w^\alpha (z)|^2dv_\alpha(z)$$
where $k_w^\alpha$ is the normalized reproducing kernel of $A_\alpha^2(\mathbb {D})$.

 Invertible and bounded Toeplitz products have been first obtained for the Hilbert-Bergman space $A_\alpha^2(\mathbb D)$ by K. Stroethoff and D. Zheng in \cite{SZ1}. They proved the following.
\begin{theorem}[K. Stroethoff and D. Zheng \cite{SZ1}]\label{theo:invertp=2}
Given $f,g\in A_\alpha^2(\mathbb D)$, $-1< \alpha<\infty$, the product $T_fT_{\overline g}$ is bounded and invertible on $A_\alpha^2(\mathbb D)$ if and only if
\begin{equation}\label{eq:invertdisc1}
\inf_{w\in \mathbb D}|f(w)||g(w)|>0
\end{equation}
and \begin{equation}\label{eq:sarason}
\sup_{w\in \mathbb {D}}\left(B_\alpha(|f|^2)(w)\right)^{1/2}\left(B_\alpha(|g|^2)(w)\right)^{1/2}=\sup_{w\in \mathbb {D}}\|fk_w^\alpha\|_{2,\alpha}\|gk_w^\alpha\|_{2,\alpha}<\infty.
\end{equation}

\end{theorem}

The above result was recently extended to the unit polydisc  by Z. Sun and Y. Lu in \cite{LS}. In \cite{Isral}, J. Isralowitz extended Theorem \ref{theo:invertp=2} to all $A_\alpha^p(\mathbb D)$ using among others a result of J. Miao \cite{Miao} and B\'ekoll\'e-Bonami weights. More precisely, he proved the following result.
\begin{theorem}[J. Isralowitz \cite{Isral}]\label{theo:invertp}
Let $1<p,q<\infty$, $p=q(p-1)$ and $-1< \alpha<\infty$. Given $f\in A_\alpha^p(\mathbb D)$ and $g\in A_\alpha^q(\mathbb D)$,  the product $T_fT_{\overline g}$ is bounded and invertible on $A_\alpha^p(\mathbb D)$ if and only if
\begin{equation}\label{eq:invertdisc2}
\inf_{w\in \mathbb D}|f(w)||g(w)|>0
\end{equation}
and
\begin{equation}\label{eq:sarasonp}
\sup_{w\in \mathbb {D}}B_\alpha(|f\left(k_w^\alpha\right)^{1-\frac{2}{p}}|^p)(w)B_\alpha(|g\left(k_w^\alpha\right)^{1-\frac{2}{p}}|^p)(w)<\infty.
\end{equation}
\end{theorem}
Our aim in this paper is to extend the above result to the polydisc. Our motivation comes from the current and growing research activity on weighted norm inequalities for the Bergman projection and their application to some other operators as Toeplitz products.

\section{Statement of the result}

We recall that $\mathbb D^n$ is the unit polydisc, where $\mathbb D$ is the unit disc of $\mathbb C$. For any two points $z = (z_1 ,\cdots, z_n )$ and $w = (w_1 ,\cdots, w_n )$ in $\mathbb C^n$, we write
$$\langle z, w\rangle = z_1 \bar w_1 +\cdots+ z_n \bar w_n,$$ and $$|z|^2 = |z_1 |^2 +\cdots + |z_n |^2.$$
For any real vector ${\alpha}=(\alpha_1,\cdots,\alpha_n)$, where each $\alpha_j$ satisfies $\alpha_j>-1$, in which case we write $\alpha>-{\bf 1}$, we consider the measure on $\mathbb D^n$ given by $$dv_{ {\alpha}}(z)=c_{ \alpha}\prod_{j=1}^n(1-|z_j|^2)^{\alpha_j}d\nu(z_j),$$
$c_{ {\alpha}}=\prod_{j=1}^n(\alpha_j+1)$, $d\nu$ being the Lebesgue area measure on $\mathbb D$, normalized so that the measure of $\mathbb D$ is $1$. Let $H(\mathbb D^n)$ denote the space of analytic functions on $\mathbb D^n$. For $0 < p < \infty $, the (vector) weighted Bergman space
$A_{ {\alpha}}^p(\mathbb D^n)$ is the intersection $L^p (\mathbb D^n , dv_{ {\alpha}} ) \cap H(\mathbb D^n )$.
The (unweighted) Bergman space $A^p$ corresponds to $ {\alpha}=(0,\cdots,0)$. We will be writing
$$\|f\|_{p, {\alpha}}:=\left(\int_{\mathbb D^n}|f (z)|^p dv_{ {\alpha}} (z)\right)^\frac{1}{p}
$$
and
$$\langle f,g\rangle_{ \alpha}:=\int_{\mathbb D^n}f (z) \overline{g(z)}dv_{ {\alpha}} (z).$$
The Bergman space $A_{ {\alpha}}^{2}(\mathbb D^n)$ is a reproducing kernel Hilbert space with its kernel $K^{ {\alpha}}$ given by
 $$K^{ {\alpha}}(z,w)=\prod_{j=1}^n\frac{1}{(1-z_j\bar w_j)^{\alpha_j+2}}.$$
 That is for any $f\in A_{ {\alpha}}^{2}(\mathbb D^n)$, the following representation holds
\begin{equation}
f(w)=P_{ {\alpha}}f(w)=\langle f,K_w^{ {\alpha}}\rangle_{ {\alpha}}=\int_{\mathbb D^n}f(z)K^{ {\alpha}}(w,z)dv_{ {\alpha}}(z).
\end{equation}
The positive Bergman projection $P_\alpha^+$ is defined by replacing $K_w^{ {\alpha}}$ by $|K_w^{ {\alpha}}|$ in the definition of $P_\alpha$. We remark that the boundedness of $P_\alpha^+$ implies the boundedness of $P_\alpha$.
\vskip .3cm
For $f\in L^2(\mathbb D^n, dv_{ {\alpha}})$, we can densely define on $A_{ {\alpha}}^{2}(\mathbb D^n)$ the Toeplitz operator $T_f$ with symbol $f$  as in (\ref{eq:toeplitzdef}).
\vskip .1cm
Very recently bounded and invertible Toeplitz products $T_fT_{\overline g}$ on $A^2(\mathbb D^n)$ have been characterized in \cite{LS}. We extend this characterization to $A_{ \alpha}^p(\mathbb D^n)$, $1<p<\infty$. More precisely, denoting $k_w^\alpha$ the normalized kernel in $A_\alpha^2(\mathbb D^n)$, we have the following generalization.
\begin{theorem}\label{theo:boundinvertpolydisc}
Let $\alpha> -{\bf 1}=(-1,-1,\cdots,-1)$, $1<p<\infty$, $pq=p+q$, and assume that $f\in A_\alpha^p(\mathbb D^n)$ and $g\in A_\alpha^q(\mathbb D^n)$. Then the following assertions hold.
\begin{itemize}
\item[(i)] If $T_fT_{\overline g}$ is bounded and invertible on $A_\alpha^p(\mathbb D^n)$, then
\Be\label{eq:Sarasonpolydisc}
[f,g]_{p,\alpha}:=\sup_{z\in \mathbb D^n}\|fk_w^\alpha\|_{p,\alpha}\|gk_w^\alpha\|_{q,\alpha}<\infty
\Ee
and this is equivalent to
\Be\label{eq:infcondpolydisc}
\eta:=\inf_{z\in \mathbb D^n}|f(z)||g(z)|>0.
\Ee

\item[(ii)] If both (\ref{eq:Sarasonpolydisc}) and (\ref{eq:infcondpolydisc}) hold, then $$[f]_{p,\alpha}:=\sup_{w\in \mathbb D^n}\|fk_w^\alpha\|_{p,\alpha}\|f^{-1}k_w^\alpha\|_{q,\alpha}<\eta^{-2}[f,g]_{p,\alpha}^2,$$ and $T_fT_{\overline g}$ is bounded and invertible on $A_\alpha^p(\mathbb D^n)$. Moreover,
    $$\|T_fT_{\overline g}\|\le C\eta^{-2n\times\max\{p,q\}}[f,g]_{2,\alpha}^{1+2n\times\max\{p,q\}}.$$
\end{itemize}
\end{theorem}

For the proof of the necessary part in Theorem \ref{theo:boundinvertpolydisc}, we will adapt the proof of the necessary condition (\ref{eq:sarasonp}) for the boundedness of the Toeplitz products in \cite{Miao}. For the sufficient part, we define product B\'ekoll\'e-Bonami weights and use the corresponding estimate of the Bergman projection of the polydisc.
\vskip .2cm
In the next section, we provide some useful results needed later. Theorem \ref{theo:boundinvertpolydisc} is proved in Section 3. 

As usual, given two positive quantities $A$ and $B$, the notation $A\lesssim B$ (resp. $A\gtrsim B$)  means that $A\le CB$ (resp. $B\le CA$) for some absolute positive constant $C$. The notation $A\backsimeq B$ mean that $A\lesssim B$ and $B\lesssim A$. We will use $C(p)$ to say that the constant $C$ depends only on $p$.

\section{Some useful tools}
We give in this section some useful facts needed in our proofs.
\subsection{Estimates and weighted estimates in one parameter}
We recall that the Carleson box associated to an interval $I\subset \mathbb T$ is defined by
\Be\label{eq:Carlboxdisc}
Q_I:\{re^{i\theta}:1-|I|<r<1,\,\,\textrm{and}\,\,\,e^{i\theta}\in I\}.
\Ee

The following is easy to check.
\begin{lemma}\label{lem:kernelpnorm}
Let $1<p<\infty$, $-1< \alpha<\infty$. Then
$$\|k_w^\alpha\|_{p,\alpha}\backsimeq (1-|w|^2)^{(\frac{2+\alpha}{2})(\frac{2}{p}-1)},\,\,\,w=x+iy\in \mathbb{D}.$$
\end{lemma}

We have the following estimate (see for example \cite{Isral}).
\begin{lemma}\label{lem:berezinestim}
Let $1<p<\infty$, $-1< \alpha<\infty$. Then there is a constant $C>0$ such that for any $f\in A_\alpha^p(\mathbb{D})$,
\begin{equation}\label{eq:berezinestim}
|f(z)|^p\le C\|k_z^\alpha\|_{p,\alpha}^{-p}\|fk_z^\alpha\|_{p,\alpha}^p,\,\,\,z\in \mathbb{D}.
\end{equation}
\end{lemma}

Let $\omega$ be a positive function defined on $\mathbb{D}$ and $\alpha>-1$. For $1<p<\infty$, we say $\omega$ is a B\'ekoll\'e-Bonami weight (or $\omega$ belongs to the class $B_{p,\alpha}(\mathbb{D})$ if
$$[\omega]_{B_{p,\alpha}}:=\sup_{I\subset \mathbb T,\,\,\, I\,\,\,\textrm{interval}}\left(\frac{1}{|I|^{2+\alpha}}\int_{Q_I}\omega(z)d\nu_\alpha(z)\right)\left(\frac{1}{|I|^{2+\alpha}}\int_{Q_I}\omega(z)^{1-q}d\nu_\alpha(z)\right)^{p-1}<\infty,$$
$pq=p+q$.
\vskip .3cm
D. B\'ekoll\'e and A. Bonami proved in \cite{Bek} and \cite{BB} that the Bergman projection $P_\alpha$ is bounded on $L^p(\mathbb{D},\omega d\nu_\alpha)$ if and only if the weight function $\omega$ is in the class $B_{p,\alpha}(\mathbb{D})$. The following estimate of the weighted norm of the Bergman projection is due to S. Pott and M. C. Reguera.
\begin{proposition}[Pott-Reguera \cite{PR}]\label{prop:pott}
Let $1<p,q<\infty$, $p=q(p-1)$ and $-1< \alpha<\infty$. Suppose that $\omega\in B_{p,\alpha}(\mathbb{D}) $. Then
$P_\alpha$ and $P_\alpha^+$ are bounded on $L^p(\omega d\nu_\alpha)$.
Moreover, $\|P_\alpha^+\|_{L^p(\omega d\nu_\alpha)\rightarrow L^p(\omega d\nu_\alpha)}\le C(p)[\omega]_{B_{p,\alpha}}^{\max\{1,\frac{q}{p}\}}$.
\end{proposition}

Let us observe the following easy to prove fact (see for example \cite{Isral}).
\begin{lemma}\label{lem:invtobekweight}
Let $-{ 1}< \alpha<{\infty}$. Assume that $1<p<\infty$ and put $p=q(p-1)$. If $f$ is analytic on $\mathbb{D}$ with $$[f]_{p,\alpha}:=\sup_{w\in \mathbb{D}^n}\|fk_w^\alpha\|_{p,\alpha}\|f^{-1}k_w^\alpha\|_{q,\alpha}<\infty,$$
then $\omega=|f|^p$ belongs to the class $B_{p,\alpha}(\mathbb{D})$. Moreover, $$[\omega]_{B_{p,\alpha}(\mathbb{D})}\lesssim [f]_{p,\alpha}^p.$$
\end{lemma}

\subsection{Multi-parameter estimates}
Given $\alpha=(\alpha_1,\cdots, \alpha_n)\in \mathbb R^n$ and $\beta=(\beta_1,\cdots, \beta_n)\in \mathbb R^n$, the notation $\alpha<\beta$ (resp. $\alpha=\beta$) means that $\alpha_j<\beta_j$ (resp. $\alpha_j=\beta_j$), $j=1,\cdots,n$. The notation $\alpha>\beta$ is equivalent to $\beta<\alpha$ and $\alpha\le \beta$ is equivalent to $\alpha<\beta$ or $\alpha=\beta$. We also use the notations ${\bf 0}=(0,\cdots,0)$, $-{\bf 1}=(-1,\cdots,-1)$, and $\underline {\infty}=(\infty,\cdots,\infty)$.

\vskip .2cm

Let us introduce the class $\mathcal {B}_{p,\alpha}(\mathbb{D}^n)$, $1<p<\infty$, $\mathbb R^n\ni \alpha=(\alpha_1,\cdots,\alpha_n)>-{\bf 1}$. We say a positive locally integrable function $\omega$ on $\mathbb{D}^n$ belongs to $\mathcal {B}_{p,\alpha}(\mathbb{D}^n)$ if there is a constant $C>0$ such that for any $k\in \{1,\cdots,n\}$,
\Be\label{eq:defprodbekbo}\sup_{\xi=(\xi_1,\cdots,\xi_{k-1},\xi_{k+1},\cdots,\xi_n)\in \mathbb{D}^{n-1}} [\omega(\xi_1,\cdots,\xi_{k-1},\cdot,\xi_{k+1},\cdots,\xi_n)]_{B_{p,\alpha_k}(\mathbb{D})}\le C.\Ee
We denote by $[\omega]_{\mathcal {B}_{p,\alpha}(\mathbb{D}^n)}$ the infinimum of the constants $C$ in (\ref{eq:defprodbekbo}).
Note that the class $\mathcal {B}_{p,\alpha}(\mathbb{D}^n)$ is not empty; one easily checks that the weight $\omega=\prod_{j=1}^n\omega_j$ where $\omega_j\in B_{p,\alpha_j}(\mathbb{D})$ belongs to $\mathcal {B}_{p,\alpha}(\mathbb{D}^n)$.

From our definition of the multi-parameter B\'ekoll\'e-Bonami classes and Proposition \ref{prop:pott} we easily deduce the following.
\begin{proposition}\label{prop:Berprojprodcont}
Let $1<p,q<\infty$, $p=q(p-1)$ and $-{\bf 1}< \alpha=(\alpha_1,\cdots,\alpha_n)<\underline {\infty}$. Suppose that $\omega\in \mathcal {B}_{p,\alpha}(\mathbb{D}^n)$. Then both $P_\alpha$ and $P_\alpha^+$ are bounded on $L^p(\mathbb{D}^n, \omega d\nu_{\alpha}(z))$. Moreover,
$$\|P_\alpha^+\|_{L^p(\mathbb{D}^n, \omega d\nu_{\alpha}(z))\rightarrow L^p(\mathbb{D}^n, \omega d\nu_{\alpha}(z))}\le C(p)[\omega]_{\mathcal {B}_{p,\alpha}(\mathbb{D}^n)}^{n\times {\max\{1,\frac{q}{p}\}}}.$$
\end{proposition}

Let us also observe the following.
\begin{lemma}\label{lem:prodinvarcond}
Let $-{\bf 1}< \alpha<\underline {\infty}$. Assume that $1<p<\infty$ and put $p=q(p-1)$. If $f$ is analytic on $\mathbb{D}^n$ with $$[f]_{p,\alpha}:=\sup_{w\in \mathbb{D}^n}\|fk_w^\alpha\|_{p,\alpha}\|f^{-1}k_w^\alpha\|_{q,\alpha}<\infty,$$ then there is a constant $C>0$ such that for any $1\le k\le n$, and any $w=(w_1,\cdots,w_{k-1},w_{k+1},\cdots,w_{n-1})\in \mathbb{D}^{n-1}$,
$$\sup_{z\in \mathbb{D}}\|f_wk_{z}^{\alpha_k}\|_{p,\alpha_k}\|f_w^{-1}k_z^{\alpha_k}\|_{q,\alpha_k}\le C\sup_{\xi\in \mathbb{D}^n}\|fk_\xi^\alpha\|_{p,\alpha}\|f^{-1}k_\xi^\alpha\|_{q,\alpha},$$
where $f_w(z)=f(\zeta_1,\cdots,\zeta_n)$, $\zeta_j=w_j$ for $j\neq k$ and $\zeta_k=z$.
\end{lemma}
\begin{proof}
We may suppose that $k=1$. Let $w=(w_1,\cdots,w_{n-1})\in \mathbb{D}^{n-1}$ be given. For any $z=x+iy\in \mathbb{D}$ given, we put $\zeta=(z,w_1,\cdots, w_{n-1})$ and $\tilde {\alpha}=(\alpha_2,\cdots,\alpha_n)$. Then using Lemma \ref{lem:kernelpnorm} and Lemma \ref{lem:berezinestim}, we obtain
\Beas
\|f_wk_{z}^{\alpha_1}\|_{p,\alpha_1}^p &=& \int_{\mathbb{D}}|f(\xi,w)|^p|k_z^{\alpha_1}(\xi)|^p(1-|\xi|^2)^{\alpha_1}d\nu(\xi)\\ &=& \int_{\mathbb{D}}|f_{\xi}(w)|^p|k_z^{\alpha_1}(\xi)|^p(1-|\xi|^2)^{\alpha_1}d\nu(\xi)\\ &\lesssim& \|k_w^{\tilde {\alpha}}\|_{p,\tilde {\alpha}}^{-p}\int_{\mathbb{D}}\|f_{\xi}k_w^{\tilde {\alpha}}\|_{p,\tilde{\alpha}}^p|k_z^{\alpha_1}(\xi)|^p(1-|\xi|^2)^{\alpha_1}d\nu(\xi)\\ &=&  \|k_w^{\tilde {\alpha}}\|_{p,\tilde {\alpha}}^{-p}\|fk_{\zeta}^\alpha\|_{p,\alpha}^p.
\Eeas
In the same way, we obtain
$$\|f_w^{-1}k_z^{\alpha_1}\|_{q,\alpha_1}\lesssim \|k_w^{\tilde {\alpha}}\|_{q,\tilde {\alpha}}^{-q}\|f^{-1}k_{\zeta}^\alpha\|_{q,\alpha}^q.$$
Thus as $\|k_w^{\tilde {\alpha}}\|_{p,\tilde {\alpha}}^{-p}\|k_w^{\tilde {\alpha}}\|_{q,\tilde {\alpha}}^{-q}\backsimeq 1$, we obtain for any $k\in \{1,2,\cdots,n\}$,
$$\|f_wk_{z}^{\alpha_k}\|_{p,\alpha_k}\|f_w^{-1}k_z^{\alpha_k}\|_{q,\alpha_k}\lesssim \sup_{\zeta\in \mathbb{D}^n}\|fk_\zeta^\alpha\|_{p,\alpha}\|f^{-1}k_\zeta^\alpha\|_{q,\alpha}.$$
The proof is complete.
\end{proof}
Putting together the definition of B\'ekoll\'e-Bonami classes of the polydisc, the above lemma and Lemma \ref{lem:invtobekweight}, we obtain the following.
\begin{lemma}\label{lem:prodinvtobekweight}
Let $-{\bf 1}< \alpha<\underline {\infty}$. Assume that $1<p<\infty$ and put $p=q(p-1)$. If $f$ is an analytic function on $\mathbb{D}^n$ such that $$[f]_{p,\alpha}:=\sup_{w\in \mathbb{D}^n}\|fk_w^\alpha\|_{p,\alpha}\|f^{-1}k_w^\alpha\|_{q,\alpha}<\infty,$$ then the function $\omega=|f|^p$ belongs to the class $\mathcal {B}_{p,\alpha}(\mathbb{D}^n)$. Moreover, $$[\omega]_{B_{p,\alpha}(\mathbb{D}^n)}\lesssim [f]_{p,\alpha}^p.$$
\end{lemma}

\section{Bounded and invertible Toeplitz products on the unit Polydisc of $\mathbb C^n$}
Before proving Theorem \ref{theo:boundinvertpolydisc}, let us prove some useful results. We start by the following lemma.
\begin{lemma}\label{lem:lowerboundofprod}
Let $\alpha>-{\bf 1}$, $1<p<\infty$, $qp=p+q$, and let $f\in A_\alpha^p(\mathbb D^n)$, $g\in A_\alpha^q(\mathbb D^n)$. Then
\Be\label{eq:lowerboundofprod}
\|f\|_{p,\alpha}\|g\|_{q,\alpha}\lesssim \|T_fT_{\overline g}\|.
\Ee
\end{lemma}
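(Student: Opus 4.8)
The plan is to establish the lower bound $\|f\|_{p,\alpha}\|g\|_{q,\alpha}\lesssim\|T_fT_{\overline g}\|$ by testing the operator $T_fT_{\overline g}$ against a well-chosen sequence of functions and extracting the norms of $f$ and $g$ from the resulting estimates. The natural test functions here are the normalized reproducing kernels $k_w^\alpha$, exactly as in the upper-half plane arguments of Sections 3 and 4. First I would recall the fundamental computation $T_fT_{\overline g}k_w^\alpha=\overline{g(w)}\,fk_w^\alpha$, which is the polydisc analogue of the identity used repeatedly above; it follows from $T_{\overline g}k_w^\alpha=\overline{g(w)}\,k_w^\alpha$ (since $T_{\overline g}$ is the adjoint of $T_g$ and $g$ is analytic) together with $T_f(k_w^\alpha)=P_\alpha(fk_w^\alpha)=fk_w^\alpha$ when $f$ is analytic.

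The core idea is to integrate this pointwise identity over the polydisc. Testing boundedness on $k_w^\alpha$ and taking $A_\alpha^p$ norms gives
\Be
|g(w)|\,\|fk_w^\alpha\|_{p,\alpha}=\|T_fT_{\overline g}k_w^\alpha\|_{p,\alpha}\le \|T_fT_{\overline g}\|\,\|k_w^\alpha\|_{p,\alpha}.
\Ee
The plan is then to integrate a suitable power of this inequality against $dv_\alpha(w)$ and use a Fubini/Tonelli interchange together with the reproducing property to recover $\|f\|_{p,\alpha}\|g\|_{q,\alpha}$ on the left. Concretely, I would multiply through by $|f(w)|$, use the sub-mean-value estimate (the polydisc version of Lemma \ref{lem:berezinestim}) to bound $|f(w)|\,|g(w)|$ from below in terms of local averages, and then integrate; the product structure of $k_w^\alpha$ and of $dv_\alpha$ lets one factor the computation across the $n$ coordinates. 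Alternatively, and perhaps more cleanly, one integrates the identity $T_fT_{\overline g}k_w^\alpha=\overline{g(w)}fk_w^\alpha$ directly: pairing against a dual test function and exploiting that $\int_{\mathbb D^n}\overline{g(w)}K^\alpha(\cdot,w)\,dv_\alpha(w)$ reproduces $\overline g$ up to conjugation should produce the product $\|f\|_{p,\alpha}\|g\|_{q,\alpha}$ after one application of H\"older with the conjugate exponents $p,q$ satisfying $pq=p+q$.

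The step I expect to be the main obstacle is the interchange of integration and the correct bookkeeping of the weight powers so that the norms $\|f\|_{p,\alpha}$ and $\|g\|_{q,\alpha}$ emerge with the exponents matched by the relation $pq=p+q$ (equivalently $1/p+1/q=1$). The delicate point is ensuring that after integrating in $w$ the kernel factors collapse to give exactly $\|f\|_{p,\alpha}^{?}\|g\|_{q,\alpha}^{?}$ rather than some unbalanced power, and this is where the duality pairing $\langle\cdot,\cdot\rangle_\alpha$ and the choice of which variable to integrate first must be handled carefully; the product (tensor) structure of the polydisc kernel is what makes the $n$-dimensional case reduce to $n$ applications of the one-variable computation, so I would set up the argument to separate variables from the outset. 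Once the pairing is correctly arranged, the final inequality follows from a single application of H\"older's inequality and the reproducing property, giving the claimed bound.
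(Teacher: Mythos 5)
Your testing identity $T_fT_{\overline g}k_w^\alpha=\overline{g(w)}\,fk_w^\alpha$ is correct, but the route you build on it cannot reach the conclusion. Everything that testing on reproducing kernels can yield is control of the \emph{pointwise product} $f\overline g$, i.e.\ the Berezin symbol of $T_fT_{\overline g}$: taking norms and applying the polydisc version of Lemma \ref{lem:berezinestim} gives $\sup_{w\in\mathbb D^n}|f(w)g(w)|\lesssim\|T_fT_{\overline g}\|$, and integrating gives $\|f\overline g\|_{1,\alpha}\lesssim\|T_fT_{\overline g}\|$. Neither of these dominates $\|f\|_{p,\alpha}\|g\|_{q,\alpha}$. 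H\"older's inequality runs in the wrong direction ($\|fg\|_{1,\alpha}\le\|f\|_{p,\alpha}\|g\|_{q,\alpha}$), and there is no reverse inequality for pairs of analytic functions: in one variable take $s=\frac{2+\alpha}{p}-\delta$, $t=\frac{2+\alpha}{q}-\delta$, $f_\delta(z)=(1+z)^{t}(1-z)^{-s}$ and $g_\delta=1/f_\delta=(1-z)^{s}(1+z)^{-t}$. Then $f_\delta\in A_\alpha^p(\mathbb D)$, $g_\delta\in A_\alpha^q(\mathbb D)$, and $|f_\delta g_\delta|\equiv 1$, so $\sup_w|f_\delta g_\delta|=\|f_\delta g_\delta\|_{1,\alpha}=1$, while $\|f_\delta\|_{p,\alpha}\|g_\delta\|_{q,\alpha}\backsimeq\delta^{-1}\to\infty$ as $\delta\to0^+$ (tensor such examples to get the case $n>1$). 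Thus the step you flag as ``delicate bookkeeping of weight powers'' is not a bookkeeping issue: no choice of exponents or order of integration can convert bounds on $f\overline g$ into a lower bound for $\|T_fT_{\overline g}\|$ by $\|f\|_{p,\alpha}\|g\|_{q,\alpha}$.

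The missing idea, which is how the paper argues, is to work at the operator level rather than the scalar level. One introduces the rank-one operator $(f\otimes g)h=\langle h,g\rangle_\alpha f$, whose norm on $A_\alpha^p(\mathbb D^n)$ is exactly $\|f\|_{p,\alpha}\|g\|_{q,\alpha}$, and then \emph{reconstructs this operator from} $T_fT_{\overline g}$. The Berezin transforms are $B_\alpha\bigl(T_{z^l}T_fT_{\overline g}T_{\overline{z^l}}\bigr)(w)=w^l\overline{w^l}\,f(w)\overline{g(w)}$ and $B_\alpha(f\otimes g)(w)=\prod_{j=1}^n(1-|w_j|^2)^{\alpha_j+2}f(w)\overline{g(w)}$; expanding $\prod_{j=1}^n(1-x_j)^{2+\alpha_j}=\sum_{l\in\mathbb N^n}\lambda_{\alpha,l}x^l$, which converges absolutely on the closed polydisc because $\alpha_j>-1$, and using the injectivity of the Berezin transform, one obtains the operator identity
\[
f\otimes g=\sum_{l\in\mathbb N^n}\lambda_{\alpha,l}\,T_{z^l}T_fT_{\overline g}T_{\overline{z^l}},
\]
whence $\|f\|_{p,\alpha}\|g\|_{q,\alpha}=\|f\otimes g\|\le\bigl(\sum_{l}|\lambda_{\alpha,l}|\bigr)\|T_fT_{\overline g}\|$ since $\|T_{z^l}\|\le1$. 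This Stroethoff--Zheng device is precisely what your plan lacks: kernel testing recovers only the Berezin symbol of $T_fT_{\overline g}$, and the counterexample above shows that the symbol alone carries too little information, whereas the lemma requires recapturing from $T_fT_{\overline g}$ an operator whose norm equals the product of the two norms.
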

\begin{proof}
Let $f\in A_\alpha^p(\mathbb D^n)$ and $g\in A_\alpha^q(\mathbb D^n)$. Consider on $A_{\alpha}^p (\mathbb D^n )$ the operator $f\otimes g$  defined by
$$(f \otimes g)h = \langle h, g\rangle_{\alpha} f,$$
for $h\in A_{\alpha}^p(\mathbb D^n)$. It is easily proved that $f \otimes g$ is bounded on $A_{\alpha}^p(\mathbb D^n)$ with norm equals to
\Be\label{eq:otimesnorm}
\|f \otimes g\|=\|f\|_{p,\alpha}\|g\|_{q,\alpha}.
\Ee
\vskip .1cm
Next recall that the Berezin transform of an operator $S$ defined on $A_{\alpha}^2(\mathbb D^n)$ is given by
\begin{equation}\label{eq:Berezindef}
B_{\alpha}(S) (w)=\langle Sk_w^{\alpha},k_w^{\alpha}\rangle_{\alpha}.
\end{equation}
Observing that
$T_{\overline f}K^{\alpha}_w=\overline {f}(w)K^{\alpha}_w$, we easily obtain
\begin{equation}\label{eq:berenzinofprodtoep}
B_{\alpha}(T_fT_{\overline g})(w)=f(w)\overline {g}(w).
\end{equation}
One also proves as in the one parameter case that

\begin{equation}\label{eq:berenzinofrank1oper}
B_{\alpha}(f \otimes g)(w)=\langle (f \otimes g)k_w^{\alpha},k_w^{\alpha}\rangle_{\alpha}=\prod_{j=1}^n{(1-|w_j|^2)^{\alpha_j+2}}f(w)\overline{g(w)}.
\end{equation}

Note also that if ${(\lambda)_{\alpha_0,k}}$ is a sequence defined so that the Taylor expansion of $(1-x)^{2+\alpha_0}$ in a neighborhood of the origin
 is \Be\label{eq:Taylorparaone}(1-x)^{2+\alpha_0}=\sum_{k=0}^\infty \lambda_{\alpha_0,k}x^k,\Ee then for $\alpha_0>-1$ the series in the right hand side of (\ref{eq:Taylorparaone}) is absolutely convergent in the closed unit disc of the complex plane. Hence, if $x=(x_1,\cdots,x_n)$ and $\lambda_{\alpha,l}=\lambda_{\alpha_1,l_1}\cdots \lambda_{\alpha_n,l_n}$ we have
\begin{equation}\label{eq:taylor}
\prod_{j=1}^n(1-x_j)^{2+\alpha_j}=\sum_{{ l}\in \mathbb N^n}\lambda_{\alpha,l}x^l,
\end{equation}
where $x^l=x_1^{l_1}\cdots x_n^{l_n}$ and the series in (\ref{eq:taylor}) is absolutely convergent in the closed unit polydisc.

It follows from (\ref{eq:berenzinofprodtoep}) that for $t=(t_1,\cdots,t_n)$, the Berezin transform of the product $T_{z^t}T_fT_{\overline g}T_{\overline {z}^t}$ is given by
\Beas
B_\alpha\left(T_{z^t}T_fT_{\overline g}T_{\overline {z}^t}\right)(w) &=& B_\alpha\left(T_{z^t f}T_{\overline {gz^t}}\right)(w)\\
&=& w^t f(w)\overline {g(w)w^t}.
\Eeas
Thus using the expansion (\ref{eq:taylor}), equation (\ref{eq:berenzinofrank1oper}) and the injectivity of the Berezin transform, we obtain
\begin{equation}\label{eq:rank1toepl}
f \otimes g = \sum_{l\in \mathbb N^n}\lambda_{\alpha,l}T_{z^l}T_fT_{\overline g}T_{\overline {z^l}}=\sum_{l\in \mathbb N^n}\lambda_{\alpha,l}T_{z_1^{l_1}\cdots z_n^{l_n}}T_fT_{\overline g}T_{\overline {z_1^{l_1}\cdots z_n^{l_n}}}.
\end{equation}
It follows that if $s_{\alpha}=\sum_{l\in \mathbb N^n}|\lambda_{\alpha,l}|$, then as $||T_z||=1$, we have
\begin{equation*}
||f||_{p,\alpha}||g||_{q,\alpha}=||f \otimes g||\le s_{\alpha}||T_fT_{\overline g}||
\end{equation*}
for $f\in A_{\alpha}^p(\mathbb D^n)$ and $g\in A_{\alpha}^q(\mathbb D^n)$. The proof is complete.
\end{proof}
We also need the following necessary condition for the boundedeness of the Toeplitz product. It extends the one-parameter result in \cite{Miao}.
\begin{proposition}\label{prop:miaopolydisc}
Let $\alpha>-{\bf 1}$, $1<p<\infty$, $qp=p+q$, and let $f\in A_\alpha^p(\mathbb D^n)$, $g\in A_\alpha^q(\mathbb D^n)$. If $T_fT_{\overline g}$ is bounded on $A_\alpha^p(\mathbb D^n)$, then (\ref{eq:Sarasonpolydisc}) holds. Moreover,
\Be\label{eq:necessboundpoly}
[f,g]_{p,\alpha}\lesssim \|T_fT_{\overline g}\|.
\Ee
\end{proposition}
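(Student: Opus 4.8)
The plan is to adapt the one–parameter argument of Miao to the polydisc, exploiting the product structure of the kernel $k_w^\alpha$ and the factorisation of the weight $dV_\alpha$. The key idea is a \emph{reproducing kernel thesis}: since $T_fT_{\overline g}$ is bounded and $T_{\overline g}k_w^\alpha = \overline{g(w)}\,k_w^\alpha$, we have $\|T_fT_{\overline g}k_w^\alpha\|_{p,\alpha} = |g(w)|\,\|fk_w^\alpha\|_{p,\alpha} \le \|T_fT_{\overline g}\|\,\|k_w^\alpha\|_{p,\alpha}$ for every $w\in\mathbb D^n$. Symmetrically, testing the adjoint $T_gT_{\overline f}$ (bounded on $A_\alpha^q$ with the same norm) against $k_w^\alpha$ gives $|f(w)|\,\|gk_w^\alpha\|_{q,\alpha} \le \|T_gT_{\overline f}\|\,\|k_w^\alpha\|_{q,\alpha}$. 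Multiplying these two inequalities would immediately control $|f(w)||g(w)|\,\|fk_w^\alpha\|_{p,\alpha}\|gk_w^\alpha\|_{q,\alpha}$, so the whole difficulty is to pass from this product (which carries the extra factor $|f(w)||g(w)|$) to $[f,g]_{p,\alpha}$ itself.

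First I would establish a polydisc analogue of the pointwise estimate in Lemma \ref{lem:berezinestim}, namely $|f(w)|\,\|k_w^\alpha\|_{p,\alpha} \lesssim \|fk_w^\alpha\|_{p,\alpha}$, proved by applying the subharmonicity/mean–value estimate on a product of Bergman balls (equivalently Carleson boxes in each coordinate) and using that the kernel is comparable to $\|k_w^\alpha\|_{p,\alpha}$ on such a box; the product structure lets one run the one–variable argument coordinate by coordinate. This reverse bound, combined with the two testing inequalities above, should suffice: from $|g(w)|\|fk_w^\alpha\|_{p,\alpha}\lesssim \|T_fT_{\overline g}\|\,\|k_w^\alpha\|_{p,\alpha}$ and $|f(w)|\|k_w^\alpha\|_{p,\alpha}\lesssim\|fk_w^\alpha\|_{p,\alpha}$ one cancels a power of $\|k_w^\alpha\|$ and removes the spurious $|f(w)|$.

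The cleanest route, however, is to bypass the factor $|f(w)|$ entirely by the Berezin–transform machinery already set up for Lemma \ref{lem:lowerboundofprod}. Recall from (\ref{eq:rank1toepl}) that $f\otimes g = \sum_{l\in\mathbb N^n}\lambda_{\alpha,l}\,T_{z^l}T_fT_{\overline g}T_{\overline{z^l}}$ with $s_\alpha=\sum_l|\lambda_{\alpha,l}|<\infty$. Fixing $w\in\mathbb D^n$ and replacing $f,g$ by the translated/normalised symbols adapted to $w$ (i.e. applying the automorphism of $\mathbb D^n$ sending $0$ to $w$), the same identity produces a rank–one operator whose norm equals $\|fk_w^\alpha\|_{p,\alpha}\|gk_w^\alpha\|_{q,\alpha}$ up to constants, while the right–hand side is bounded by $s_\alpha\|T_{f_w}T_{\overline{g_w}}\|\le s_\alpha\|T_fT_{\overline g}\|$ since composition with the (unitary) weighted composition operators induced by the automorphism preserves the Toeplitz–product norm. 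Taking the supremum over $w$ then yields $[f,g]_{p,\alpha}\lesssim\|T_fT_{\overline g}\|$, which is (\ref{eq:necessboundpoly}), and in particular (\ref{eq:Sarasonpolydisc}).

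The main obstacle will be the covariance step: one must verify that conjugating $T_fT_{\overline g}$ by the weighted composition operators $U_w$ implementing the automorphism $\varphi_w$ of $\mathbb D^n$ (with weight the appropriate power of the Jacobian $k_w^\alpha$) again gives a Toeplitz product, with symbols $f\circ\varphi_w$ and $g\circ\varphi_w$ suitably weighted, and that $U_w$ is an isometry $A_\alpha^p\to A_\alpha^p$. This is standard in one variable but in the polydisc requires checking that everything factors through the product automorphism group $\operatorname{Aut}(\mathbb D)^n$ and that the normalised kernels transform as $k_{\varphi_w(0)}^\alpha = k_w^\alpha$ under $U_w$. Once this covariance is in hand, the supremum over $w$ reduces to the single estimate at the origin supplied by Lemma \ref{lem:lowerboundofprod}, and the conclusion follows.
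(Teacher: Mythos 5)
Your third paragraph is essentially the paper's own proof: the paper adapts Miao's one-parameter argument by conjugating $T_fT_{\overline g}$ with automorphism-induced operators to reduce matters to the origin, where the rank-one identity of Lemma \ref{lem:lowerboundofprod} applies. The covariance obstacle you flag at the end is resolved exactly along the lines you anticipate, with one refinement worth noting: for $p\neq 2$ there is no clean unitary conjugation on $A_\alpha^p$, so the paper uses the $A_\alpha^p$-isometry $U_a^p(h)=(h\circ\varphi_a)k_a^\alpha(\overline{k_a^\alpha})^{2/p-1}$ and its projected version $V_a^p=P_\alpha U_a^p$, and it translates not $f,g$ themselves but the weighted symbols $f_1=f(k_a^\alpha)^{\frac{2}{q}-1}$ and $g_1=g(k_a^\alpha)^{\frac{2}{p}-1}$, chosen so that the stray kernel powers are reabsorbed into the Toeplitz operators, $T_{\overline{f_1}}T_{(\overline{k_a^\alpha})^{1-2/q}}=T_{\overline f}$ and $T_{\overline{g_1}}T_{(\overline{k_a^\alpha})^{1-2/p}}=T_{\overline g}$. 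This produces only the one-sided estimate $\|T_{f_1\circ\varphi_a}T_{\overline{g_1}\circ\varphi_a}\|\lesssim\|T_fT_{\overline g}\|$ rather than the norm equality your sketch asserts, but that is all the argument needs; the identities $\|f_1\circ\varphi_a\|_{p,\alpha}=\|fk_a^\alpha\|_{p,\alpha}$ and $\|g_1\circ\varphi_a\|_{q,\alpha}=\|gk_a^\alpha\|_{q,\alpha}$ then finish the proof.

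By contrast, the shortcut in your second paragraph cannot work and should not be presented as sufficient. Testing on kernels gives $|g(w)|\,\|fk_w^\alpha\|_{p,\alpha}\le\|T_fT_{\overline g}\|\,\|k_w^\alpha\|_{p,\alpha}$, and the mean-value estimate gives $|f(w)|\,\|k_w^\alpha\|_{p,\alpha}\lesssim\|fk_w^\alpha\|_{p,\alpha}$; multiplying these and cancelling $\|fk_w^\alpha\|_{p,\alpha}\,\|k_w^\alpha\|_{p,\alpha}$ leaves only $|f(w)||g(w)|\lesssim\|T_fT_{\overline g}\|$, that is, a bound on the Berezin transform of the product, which is strictly weaker than (\ref{eq:Sarasonpolydisc}). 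To upgrade this to $[f,g]_{p,\alpha}\lesssim\|T_fT_{\overline g}\|$ you would need the genuinely reversed inequality $\|fk_w^\alpha\|_{p,\alpha}\lesssim|f(w)|\,\|k_w^\alpha\|_{p,\alpha}$, which is false in general: it fails at every zero of a non-identically-zero $f$, and for zero-free $f$ it is essentially the invariance hypothesis (\ref{eq:invarcond}) of Theorem \ref{theo:main2}, i.e. an additional assumption on the symbol, not a consequence of boundedness. This is precisely why the conjugation argument, rather than a reproducing-kernel-thesis argument, is required for the necessity of the Sarason condition.
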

\begin{proof} The proof is essentially the same as in the one-parameter in \cite{Miao}. Let us give it here for completeness. We start by recalling that for $a\in \mathbb D$, the transform $z\mapsto \varphi_a(z)=\frac{a-z}{1-\overline {a}z}$ is an automorphism of the
unit disc $\mathbb D$ with inverse $\varphi_a^{-1}=\varphi_a$, and such that $\varphi_a(0)=a$ and $\varphi_a(a)=0$. Now for $a=(a_1,\cdots,a_n)\in \mathbb D^n$, the corresponding mapping in the unit polydisc $\mathbb D^n$ is the transform $z=(z_1,\cdots,z_n)\in \mathbb {D}^n\mapsto \varphi_a(z)=(\varphi_{a_1}(z_1),\cdots, \varphi_{a_n}(z_n))$. This is an automorphism of $\mathbb D^n$ and its real Jacobian is $\prod_{j=1}^n|\varphi_{a_j}'(z_j)|^2$ (see \cite{SZ2}). It follows that as in the one-parameter case, the change of variables $w = \varphi_a(z)$ works in the polydisc and the following holds:
\begin{equation}\label{eq:changevar}
\int_{\mathbb D^n}f(\varphi_a(z))dv_{\alpha}(z)=\int_{\mathbb D^n}f(z)|k_a^{\alpha}(z)|^2dv_{\alpha}(z).
\end{equation}
Let us consider the mapping $U_a(h)=\left(h\circ \varphi_a\right)k_a^{\alpha}$. It is easy to check that $U_a$ is an isometry of $A_{\alpha}^2(\mathbb D^n)$. Also, $U_a$ is unitary and commutes with the Toeplitz operators in the following sense:
\begin{equation}\label{eq:UTchange}
U_aT_f=T_{f\circ \varphi_a}U_a,\,\,\,f\in A_{\alpha}^2(\mathbb D^n)
\end{equation}
and consequently, $$T_{\overline g}U_a=U_aT_{\overline {g\circ \varphi_a}}$$(see \cite{SZ2}). As a consequence, one obtains that
for any $f_1\in A_\alpha^p(\mathbb D^n)$ and $g_1\in A_\alpha^q(\mathbb D^n)$,
\Be\label{eq:miao1}
T_{f_1\circ \varphi_a}T_{\overline {g_1}\circ \varphi_a} = U_a\left(T_{f_1}T_{\overline {g_1}}\right)U_a.
\Ee
As said above, $U_a$ is  an isometry of $A_{\alpha}^2(\mathbb D^n)$. Using (\ref{eq:changevar}), one checks that the following operator also provides an isometry of $A_{\alpha}^p(\mathbb D^n)$,
$$U_a^p(h)=\left(h\circ \varphi_a\right)k_a^{\alpha}\left(\overline {k_a^{\alpha}}\right)^{2/p-1}.$$
The following operator was introduced in one-parameter case by J. Miao in \cite{Miao}:
$$ V_a^p(h)=P_\alpha\left(U_a^p(h)\right) =P_\alpha\left(\left(h\circ \varphi_a\right)k_a^{\alpha}\left(\overline {k_a^{\alpha}}\right)^{2/p-1}\right).$$
Using the following identity which holds for any $f_2\in A_{\alpha}^p(\mathbb D^n)$ and any $g_2\in A_{\alpha}^q(\mathbb D^n)$,
\Be\label{eq:miao2}
T_{\overline {f_2}}P_\alpha(g_2)=P_\alpha(\overline {f_2}g_2),
\Ee
we obtain for any $h\in A_{\alpha}^p(\mathbb D^n)$,
\Beas
T_{\left(\overline {k_a^{\alpha}}\right)^{1-2/p}}V_a^p(h) &=& P_\alpha\left(\left(\overline {k_a^{\alpha}}\right)^{1-2/p}\left(h\circ \varphi_a\right)k_a^{\alpha}\left(\overline {k_a^{\alpha}}\right)^{2/p-1}\right)\\ &=& \left(h\circ \varphi_a\right)k_a^{\alpha}.
\Eeas
That is
\Be\label{eq:miao3}
T_{\left(\overline {k_a^{\alpha}}\right)^{1-2/p}}V_a^p(h)=U_a(h).
\Ee
From (\ref{eq:miao1}), we have for any $u\in A_{\alpha}^p(\mathbb D^n)$ and any $v\in A_{\alpha}^q(\mathbb D^n)$,
\Be\label{eq:miao01} \langle T_{f_1\circ \varphi_a}T_{\overline {g_1}\circ \varphi_a}u,v\rangle_\alpha = \langle T_{\overline {g_1}}\left(U_a(u)\right), T_{\overline {f_1}}\left(U_a(v)\right)\rangle_\alpha.
\Ee
It follows from (\ref{eq:miao01}) and (\ref{eq:miao3}) that for any $u\in A_{\alpha}^p(\mathbb D^n)$ and any $v\in A_{\alpha}^q(\mathbb D^n)$,
\Be\label{eq:miao4}
\langle T_{f_1\circ \varphi_a}T_{\overline {g_1}\circ \varphi_a}u,v\rangle_\alpha=\langle T_{\overline {g_1}}T_{\left(\overline {k_a^{\alpha}}\right)^{1-2/p}}V_a^p(u), T_{\overline {f_1}}T_{\left(\overline {k_a^{\alpha}}\right)^{1-2/q}}V_a^q(v)\rangle_\alpha.
\Ee
Let us take
$$f_1=\frac{f}{\left(k_a^{\alpha}\right)^{1-2/q}},\,\,\,g_1=\frac{g}{\left(k_a^{\alpha}\right)^{1-2/p}}.$$
Clearly, $f_1$ and $g_1$ are holomorphic and consequently,
$$T_{\overline {f_1}}T_{\left(\overline {k_a^{\alpha}}\right)^{1-2/q}}=T_{\overline {f_1\left(k_a^{\alpha}\right)^{1-2/q}}}=T_{\bar f}$$
and
$$T_{\overline {g_1}}T_{\left(\overline {k_a^{\alpha}}\right)^{1-2/p}}=T_{\overline {g_1\left(k_a^{\alpha}\right)^{1-2/p}}}=T_{\bar g}.$$
It follows from the H\"older's inequality that
\Beas
\left|\langle T_{f_1\circ \varphi_a}T_{\overline {g_1}\circ \varphi_a}u,v\rangle_\alpha\right| &=& \left|\langle T_{f}T_{\overline {g}}V_a^p(u),V_a^q(v)\rangle_\alpha\right|\\ &\le& \|T_{f}T_{\overline {g}}\|\|V_a^p(u)\|_{p,\alpha}\|V_a^q(v)\|_{q,\alpha}\\ &\le& \|T_{f}T_{\overline {g}}\|\|U_a^p(u)\|_{p,\alpha}\|U_a^q(v)\|_{q,\alpha}\\ &=& \|T_{f}T_{\overline {g}}\|\|u\|_{p,\alpha}\|v\|_{q,\alpha}
\Eeas
Thus using Lemma \ref{lem:lowerboundofprod},
$$\|f_1\circ \varphi_a\|_{p,\alpha}\|g_1\circ \varphi_a\|_{q,\alpha}\lesssim \| T_{f_1\circ \varphi_a}T_{\overline {g_1}\circ \varphi_a}\|\le \|T_{f}T_{\overline {g}}\|.$$
Next we observe that
$$
f_1\circ \varphi_a = (f\circ \varphi_a)(k_a^\alpha\circ \varphi_a)^{-1+\frac{2}{q}}=(f\circ \varphi_a)(k_a^\alpha\circ \varphi_a)^{1-\frac{2}{p}}
$$
and consequently using (\ref{eq:changevar}) that
$$\|f_1\circ \varphi_a\|_{p,\alpha}=\|fk_a^\alpha\|_{p,\alpha}.$$
In the same way, we obtain
$$\|g_1\circ \varphi_a\|_{q,\alpha}=\|gk_a^\alpha\|_{q,\alpha}.$$
We conclude that
$$\|fk_a^\alpha\|_{p,\alpha}\|gk_a^\alpha\|_{q,\alpha}\lesssim \|T_{f}T_{\overline {g}}\|.$$
The proof is complete.
\end{proof}
Let us now prove the following.
\begin{proposition}\label{prop:main11}
Let $1<p,q<\infty$, $p=q(p-1)$ and $-1< \alpha<\infty$. Given $f\in A_\alpha^p(\mathbb{D}^n)$ and $g\in A_\alpha^q(\mathbb{D}^n)$, if the product $T_fT_{\overline g}$ is bounded and invertible on $A_\alpha^p(\mathbb{D}^n)$, then the two following conditions are equivalent.
\begin{itemize}
\item[({\it i})]
\begin{equation}\label{eq:invertplane11}
\inf_{w\in \mathbb{D}^n}|f(w)||g(w)|>0.
\end{equation}
\item[({\it ii})]
\begin{equation}\label{eq:contradsaraplane11}
\sup_{w\in \mathbb{D}^n}\|fk_w^\alpha\|_{p,\alpha}\|gk_w^\alpha\|_{q,\alpha}<\infty.
\end{equation}
\end{itemize}
\end{proposition}
\begin{proof}
We assume that $T_fT_{\overline g}$ is bounded and invertible on $A_\alpha^p(\mathbb{D}^n)$, so that its adjoint $\left(T_fT_{\overline g}\right)^*=T_gT_{\overline f}$ is bounded and invertible on $A_\alpha^q(\mathbb{D}^n)$.
Let us first observe  that the following conditions obviously hold. 
\begin{equation}\label{eq:invertplane21}
M_1:=\sup_{w\in \mathbb{D}^n}\|k_w^\alpha\|_{p,\alpha}^{-1}\|T_fT_{\overline g}k_w^\alpha\|_{p,\alpha}<\infty
\end{equation}

and
\begin{equation}\label{eq:invertplane3}
M_2:=\sup_{w\in \mathbb{D}^n}\|k_w^\alpha\|_{q,\alpha}^{-1}\|T_gT_{\overline f}k_w^\alpha\|_{q,\alpha}<\infty.
\end{equation}

Next, observing that given $w\in \mathbb{D}^n$, $T_fT_{\overline g}k_w^\alpha=\overline {g(w)}fk_w^\alpha$, we obtain that for any $w\in \mathbb{D}^n$,
\Beas 1 &\le&  \|k_w^\alpha\|_{p,\alpha}^{-1}\|\left(T_fT_{\overline g}\right)^{-1}\left(\overline {g(w)}fk_w^\alpha\right)\|_{p,\alpha}\\ 
&\le& \|k_w^\alpha\|_{p,\alpha}^{-1}\|\left(T_fT_{\overline g}\right)^{-1}\||g(w)|\|fk_w^\alpha\|_{p,\alpha}.
\Eeas
That is
\begin{equation}\label{eq:ineq1}\|k_w^\alpha\|_{p,\alpha}^{-1}|g(w)|\|fk_w^\alpha\|_{p,\alpha}\ge \frac{1}{\|\left(T_fT_{\overline g}\right)^{-1}\|}
\end{equation}

We also have that
\begin{eqnarray*} 1 &\le&  \|k_w^\alpha\|_{q,\alpha}^{-1}\|\left(T_gT_{\overline f}\right)^{-1}\left(\overline {f(w)}gk_w^\alpha\right)\|_{q, \alpha}\\ 
&\le& \|k_w^\alpha\|_{q,\alpha}^{-1}\|\left(T_gT_{\overline f}\right)^{-1}\||f(w)|\|gk_w^\alpha\|_{q,\alpha}.
\end{eqnarray*}
Which provides that
\begin{equation}\label{eq:ineq2}\|k_w^\alpha\|_{q,\alpha}^{-1}|f(w)|\|gk_w^\alpha\|_{q,\alpha}\ge \frac{1}{\|\left(T_gT_{\overline f}\right)^{-1}\|}.
\end{equation}
Combining (\ref{eq:ineq1}) and (\ref{eq:ineq2}), we see that
there are two constants $c_1, c_2>0$ such that for any $w\in \mathbb{D}^n$,
\begin{equation}\label{eq:infprodineq}
\frac{c_1}{\|\left(T_fT_{\overline g}\right)^{-1}\|\|\left(T_gT_{\overline f}\right)^{-1}\|}\le |f(w)||g(w)|\|gk_w^\alpha\|_{q,\alpha}\|fk_w^\alpha\|_{p,\alpha}\le c_2M_1M_2.
\end{equation}
Now suppose that $$M=\sup_{w\in \mathbb{D}}\|gk_w^\alpha\|_{q,\alpha}\|fk_w^\alpha\|_{p,\alpha}<\infty.$$
Then from the left inequality in (\ref{eq:infprodineq}) we get that for any $w\in \mathbb{D}^n,$
$$|f(w)||g(w)|\ge \frac{c_1}{M\|\left(T_fT_{\overline g}\right)^{-1}\|\|\left(T_gT_{\overline f}\right)^{-1}\|}>0.$$
Hence (\ref{eq:invertplane11}) holds.

Conversely, if $\eta=\inf_{w\in \mathbb{D}^n}|f(w)||g(w)|>0$, then the right inequality in (\ref{eq:infprodineq}) provides
$$\|gk_w^\alpha\|_{q,\alpha}\|fk_w^\alpha\|_{p,\alpha}\le \frac{c_2M_1M_2}{\eta}<\infty$$
for any $w\in \mathbb{D}$. That is (\ref{eq:contradsaraplane11}) holds. The proof is complete.
\end{proof}

\begin{proof}[Proof of Theorem \ref{theo:boundinvertpolydisc}]
Note that if $T_fT_{\overline g}$ is bounded on $A_\alpha^p(\mathbb D^n)$, then (\ref{eq:Sarasonpolydisc}) holds by Proposition \ref{prop:miaopolydisc}. Hence if $T_fT_{\overline g}$ is bounded and invertible on $A_\alpha^p(\mathbb D^n)$, (\ref{eq:Sarasonpolydisc}) holds and by Proposition \ref{prop:main11}  this is equivalent to (\ref{eq:infcondpolydisc}).

Now let us suppose that both (\ref{eq:Sarasonpolydisc}) and (\ref{eq:infcondpolydisc}) are satisfied. We first prove that the weight $\omega=|f|^p$ belongs to the class $\mathcal {B}_{p,\alpha}(\mathbb D^n)$. Let us observe for this that as $\eta=\inf_{z\in \mathbb D^n}|f(z)||g(z)|>0$, we have that for any $z\in \mathbb D^n$,
$|f(z)||g(z)|>\eta$ and consequence, that $|g(z)|>\eta|f(z)|^{-1}$. It follows using the analogue of the estimate (\ref{eq:berezinestim}) for the polydisc that for any $w\in \mathbb D^n$,
\Beas
[f]_{p,\alpha}:=\|fk_w^\alpha\|_{p,\alpha}\|f^{-1}k_w^\alpha\|_{q,\alpha} &=& \|fk_w^\alpha\|_{p,\alpha}|f(w)||f(w)|^{-1}\|f^{-1}k_w^\alpha\|_{q,\alpha}\\ &\le& \eta^{-2}\|fk_w^\alpha\|_{p,\alpha}|f(w)||g(w)|\|gk_w^\alpha\|_{q,\alpha}|\\ &\lesssim& \eta^{-2}\left(\|fk_w^\alpha\|_{p,\alpha}\|gk_w^\alpha\|_{q,\alpha}\right)^2\\ &\le& \eta^{-2}[f,g]_{p,\alpha}^2.
\Eeas
That is $$\sup_{w\in \mathbb D^n}\|fk_w^\alpha\|_{p,\alpha}\|f^{-1}k_w^\alpha\|_{q,\alpha}<\eta^{-2}[f,g]_{p,\alpha}^2$$
and from Lemma \ref{lem:prodinvtobekweight}, we have that this implies that $\omega=|f|^p$ belongs to the class $\mathcal {B}_{p,\alpha}(\mathbb D^n)$. Hence it follows from Proposition \ref{prop:Berprojprodcont}  that $P_\alpha^+$ is bounded on $L^p(\mathbb{D}^n, |f|^pd\nu_\alpha)$. It follows using Lemma \ref{lem:berezinestim} that
\Beas
|T_fT_{\overline g}h(z)| &=& |f(z)|\left|P_\alpha(\overline {g}h)(z)\right|\\ &\le& |f(z)|\int_{\mathbb{D}^n}|g(w)||h(w)||K^\alpha(z,w)|d\nu_\alpha(w)\\ &=& |f(z)|\int_{\mathbb{D}^n}|g(w)||f(w)||f(w)|^{-1}|h(w)||K^\alpha(z,w)|d\nu_\alpha(w)\\ &\lesssim& [f,g]_{p,\alpha}|f(z)|\int_{\mathbb{D}^n}|f(w)|^{-1}|h(w)||K^\alpha(z,w)|d\nu_\alpha(w)\\ &=& [f,g]_{p,\alpha}|f(z)|P_\alpha^+(|f|^{-1}|h|)(z).
\Eeas
Hence the boundedness of $T_fT_{\overline g}$ on $A_\alpha^p(\mathcal H)$ follows from the boundedness of $P_\alpha^+$ is bounded on $L^p(\mathbb{D}^n, |f|^pd\nu_\alpha)$  with the right estimate. Thus
$$\|T_fT_{\overline g}\|\le C(p)[\omega]_{B_{p,\alpha}(\mathbb{D}^n)}^{n\times \max\{1,\frac{q}{p}\}}[f,g]_{p,\alpha}\le C(p)[f,g]_{p,\alpha}[f]_{p,\alpha}^{n\times \max\{p,q\}}.$$
We conclude that
$$\|T_fT_{\overline g}\|\le C \eta^{-2n\times\max\{p,q\}}[f,g]_{p,\alpha}^{1+2n\times\max\{p,q\}}.$$

To prove that $T_fT_{\overline g}$ is invertible, we first observe with the right hand inequality in (\ref{eq:infprodineq}) that there is a constant $c_1>0$ such that for any $w\in \mathbb D^n$,
\Beas
c_1\le |f(w)||g(w)|\|fk_w^\alpha\|_{p,\alpha}\|gk_w^\alpha\|_{q,\alpha}\le |f(w)||g(w)|[f,g]_{p,\alpha}.
\Eeas
Hence that the function $(f\bar {g})^{-1}$ is bounded on $\mathbb D^n$. Thus the Toeplitz operator $T_{(f\bar g)^{-1}}$ is bounded on $A_\alpha^p(\mathbb D^n)$. Hence, denoting $I$ the identity operator, it follows that
$$T_fT_{\overline g}T_{(f\bar g)^{-1}}=I=T_{(f\bar g)^{-1}}T_fT_{\overline g}.$$
The proof is complete.
\end{proof}

We have the following particular case of Theorem \ref{theo:boundinvertpolydisc}.
\begin{corollary}\label{cor:boundinvertpolydisc}
Let $\alpha> -{\bf 1}=(-1,-1,\cdots,-1)$, $1<p<\infty$, $pq=p+q$, and assume that $f\in A_\alpha^p(\mathbb D^n)$ and $\frac{1}{f}\in A_\alpha^q(\mathbb D^n)$. Then $T_fT_{\overline g}$ is bounded and invertible on $A_\alpha^p(\mathbb D^n)$ if and only if $$[f]_{p,\alpha}:=\sup_{w\in \mathbb D^n}\|fk_w^\alpha\|_{p,\alpha}\|f^{-1}k_w^\alpha\|_{q,\alpha}<\infty.$$
\end{corollary}

\bibliographystyle{elsarticle-num}

\end{document}